\newtheorem{MainThm}{Theorem}
\newtheorem{thm}{Theorem}[section]
\newtheorem{prop}[thm]{Proposition}
\theoremstyle{definition}
\newtheorem{defn}[thm]{Definition}
\theoremstyle{remark}
\newtheorem{rem}[thm]{Remark}
\newtheorem{example}[thm]{Example}
\numberwithin{equation}{section}
\newcommand{\bC}{\mathbb{C}}
\newcommand{\bP}{\mathbb{P}}
\newcommand{\bQ}{\mathbb{Q}}
\newcommand{\bR}{\mathbb{R}}
\newcommand{\bZ}{\mathbb{Z}}
\newcommand{\gC}{\bold{C}}
\newcommand{\gM}{\bold{M}}
\newcommand\lra{\longrightarrow}
\newcommand\Diff{\mathrm{Diff}}
\newcommand{\hcoker}{/\!\!/}
\newcommand{\CircNum}[1]{\ooalign{\hfil\raise .00ex\hbox{\scriptsize #1}\hfil\crcr\mathhexbox20D}}
\newcommand{\map}{\mathrm{map}}
\newcommand{\CPinf}{\bC \bP^\infty}
\newcommand{\deq}{:=}
\mathchardef\ordinarycolon\mathcode`\:
\title{Relations among tautological classes revisited}
\author{Oscar Randal-Williams}
\thanks{The author was supported by the EPSRC PhD Plus scheme, ERC Advanced Grant No.\ 228082, and the Danish National Research Foundation via the Centre for Symmetry and Deformation.}
\address{Institut for Matematiske Fag\\
Universitetsparken 5\\
DK-2100 K{\o}benhavn {\O}\\
Denmark}
\email{o.randal-williams@math.ku.dk}
\subjclass[2010]{14H15, 32G15}
\keywords{moduli spaces, tautological algebra, Abel--Jacobi map}
\begin{document}

\begin{abstract}
We give a simple generalisation of a theorem of Morita \cite{MoritaJacobianI, MoritaJacobianII}, which leads to a great number of relations among tautological classes on moduli spaces of curves.
\end{abstract}
\maketitle

\section{Introduction}

A \textit{surface bundle} $\Sigma_g \to E \overset{\pi}\to B$ is a smooth fibre bundle with fibre $\Sigma_g$, the oriented surface of genus $g$ without boundary. Such a bundle is equipped with a vertical tangent bundle $T^vE \to E$, which is an oriented 2-plane bundle --- or equivalently a complex line bundle --- over the total space $E$. The Mumford--Morita--Miller classes of this bundle are defined to be
$$\kappa_i(E) \deq \pi_!(c_1(T^vE)^{i+1}) \in H^{2i}(B;\bZ),$$
that is, the pushforward (or fibre integral) along $\pi$ of the $(i+1)$-st power of the first Chern class of the vertical tangent bundle\footnote{The sign convention usual in algebraic geometry would change this by $(-1)^{i+1}$, so that $\kappa_i$ is instead the class obtained by fibre integrating the $(i+1)$-st power of the first Chern class of the vertical \textit{co}tangent bundle.}. Isomorphism classes of such surface bundles are classified by homotopy classes of maps into the \textit{moduli space}
$$\mathcal{M}_g \deq * \hcoker \Diff^+(\Sigma_g) \simeq B\Diff^+(\Sigma_g),$$
and the characteristic classes $\kappa_i$ exist universally as classes $\kappa_i \in H^{2i}(\mathcal{M}_g;\bZ)$. This space is the homotopy type of the moduli stack of Riemann surfaces $\gM_g$, and as we are only discussing homological questions is a good substitute for it.

We define the \textit{tautological algebra} $R^*(\mathcal{M}_g)$ to be the image of the map
$$\bQ[\kappa_1, \kappa_2, ...] \lra H^{2*}(\mathcal{M}_g;\bQ).$$
Our gradings are such that $\kappa_i \in R^i(\mathcal{M}_g) \subset H^{2i}(\mathcal{M}_g;\bQ)$, to conform with the conventions of algebraic geometry.

\vspace{2ex}

Suppose that we wish to parametrise pairs $(\Sigma_g \to E \overset{\pi}\to B, c \in H^2(E;\bZ))$ of a surface bundle and a second integral cohomology class --- or equivalently isomorphism class of complex line bundle --- on the total space. Concordance classes of such pairs are classified by homotopy classes of maps into the moduli space
$$\mathcal{S}_g(\CPinf) \deq \map(\Sigma_g, \CPinf) \hcoker \Diff^+(\Sigma_g)$$
which fits into a fibration sequence
$$\map(\Sigma_g, \CPinf) \lra \mathcal{S}_g(\CPinf) \overset{f}\lra \mathcal{M}_g.$$
A surface bundle $E \overset{\pi}\to B$ determines a homotopy class of maps $B \to \mathcal{M}_g$, and giving a homotopy class of lift of this map up the fibration $f$ is equivalent to giving an element of $H^2(E;\bZ)$.

Given the pair $(\Sigma_g \to E \overset{\pi}\to B, c \in H^2(E;\bZ))$, there are characteristic classes defined by the formula
$$\kappa_{i,j} \deq \pi_!(c_1(T^vE)^{i+1} \cdot c^j) \in H^{2i+2j}(B;\bZ)$$
which generalise the Mumford--Morita--Miller classes, as $\kappa_{i, 0} = \kappa_i$. The fundamental invariant of a class $c \in H^2(E;\bZ)$ is its \textit{degree}, which is defined to be the degree of the class restricted to any fibre --- or equivalently as the integer $\kappa_{-1,1} = \pi_!(c) \in H^0(B;\bZ)$. The moduli space $\mathcal{S}_g(\CPinf)$ splits into path components
$$\mathcal{S}_g(\CPinf) = \coprod_{d \in \bZ} \mathcal{S}_g(\CPinf)_d$$
indexed by the degree. Our main theorem, which is nothing but a simple extension of the work of Morita \cite{MoritaJacobianI, MoritaJacobianII}, gives a characteristic class on $\mathcal{S}_g(\CPinf)$ whose $(g+1)$-st power is rationally trivial. In \S \ref{sec:GeneralisingMorita} we then show how to use this class to produce relations in the cohomology of $\mathcal{M}_g$. As it will occur in almost every formula, we write $\chi = \chi(\Sigma_g) = 2-2g$ for the Euler characteristic of the surface $\Sigma_g$.
\begin{MainThm}\label{thm:Main}
The cohomology class
$$\Omega \deq -\frac{1}{\gcd(\chi,d)^2}\left(\chi^2 \kappa_{-1,2} - 2d\chi \kappa_{0,1} + d^2 \kappa_1 \right) \in H^2(\mathcal{S}_g(\CPinf)_d;\bZ)$$
has the property that $\Omega^{g+1}$ is torsion, annihilated by $\frac{(2g+2)!}{2^{g+1}(g+1)!}$.
\end{MainThm}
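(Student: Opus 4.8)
The plan is to realise $\Omega$ as (a rescaling of) the pullback of the first Chern class of the Hodge bundle over the moduli space of principally polarised abelian varieties, composed with an Abel--Jacobi type map; the vanishing of $\Omega^{g+1}$ will then come from a dimension count on the Siegel upper half space, and the torsion bound will come from the index of the relevant lattice inclusion. First I would recall Morita's construction: given a surface bundle $\Sigma_g\to E\overset{\pi}\to B$, the local system $\bR^1\pi_*\bZ$ carries the intersection form, which is a symplectic form $\omega$ on a rank-$2g$ lattice, and a choice of class $c\in H^2(E;\bZ)$ of degree $d$ gives, via the fibrewise Poincar\'e dual, a section of this local system up to the ambiguity measured by $\pi_*(c)$. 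More precisely, the "Abel--Jacobi" construction sends the pair $(E,c)$ to a map $B\to \cM_g\times_{?}\mathcal{A}_g$ landing in the universal abelian variety, and the class $c$ together with the canonical class $c_1(T^vE)$ determines a map to a point of that abelian variety; the combination $\chi^2\kappa_{-1,2}-2d\chi\kappa_{0,1}+d^2\kappa_1$ is exactly $\pi_!$ of the square of the integral class $\chi\cdot c - d\cdot c_1(T^vE)$, which has degree $\chi d - d\chi = 0$ on each fibre, hence is fibrewise a torsion-free primitive class (after dividing by $\gcd(\chi,d)$) defining a section of $\bR^1\pi_*\bZ\otimes(\text{something})$ --- equivalently a map into the universal Jacobian.

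The key computation is that $\Omega$ equals the pullback of $-c_1$ of the Hodge-type line bundle under this Jacobian map, or more precisely that on $\mathcal{S}_g(\CPinf)_d$ the class $\Omega$ is the image of the first Chern class of a natural line bundle $\mathcal{L}$ whose total space is (homotopy equivalent to) the quotient of $\mathfrak{H}_g$ (Siegel upper half space) by an arithmetic group, pulled back along a section. Concretely: the primitive integral class $\frac{1}{\gcd(\chi,d)}(\chi c - d\,c_1(T^vE))$ restricted to a fibre is Poincar\'e dual to a $1$-cycle $\gamma$, and sending $(E,c)$ to $\gamma$ defines a map to the universal Jacobian $\mathcal{J}_g\to\cM_g$; the class $\Omega$ is then $\pi_!$ of the square of a $2$-class, which by the projection formula and the standard identity $\pi_!(x^2)=-(\text{theta divisor pairing})$ becomes (twice) the pullback of the polarisation class $\Theta$ on $\mathcal{J}_g$. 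Since $\mathcal{J}_g$ is, rationally, an iterated torus bundle and $\Theta\in H^2$ of an abelian variety of dimension $g$ satisfies $\Theta^{g+1}=0$ on each fibre, and since the base $\cM_g$ contributes nothing in the relevant degree after the Leray--Hirsch argument (the cohomology of $\mathcal{J}_g$ over $\cM_g$ in these degrees is generated by $\Theta$ and classes pulled back from the base, with $\Theta$ nilpotent of order exactly $g+1$), we get $\Omega^{g+1}\in H^{2g+2}$ is in the image of the torsion. The precise torsion bound $\frac{(2g+2)!}{2^{g+1}(g+1)!}$ is the index of the sublattice generated by the $(g+1)$-fold products of $\Theta$, i.e. it is $\Theta^{g+1}/(g+1)! \cdot (g+1)!$-type normalisation: on a principally polarised abelian variety $\Theta^g/g!$ is the fundamental class, so $\Theta^{g+1}=0$ integrally, but the issue is that our $\Theta$ is only $\frac{1}{\gcd}$ times an integral class, which is why the denominator appears; the number $\frac{(2g+2)!}{2^{g+1}(g+1)!}=(2g+1)!!$ is the ratio coming from the symplectic form's self-intersection normalisation on the standard symplectic lattice of rank $2g$, i.e. $\mathrm{Pf}$ versus determinant considerations.

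The main obstacle, and where I expect the real work to lie, is making precise the identification of $\Omega$ with the Jacobian polarisation class at the integral level rather than just rationally --- in particular pinning down the exact torsion order rather than merely showing $\Omega^{g+1}$ is torsion. This requires: (i) carefully tracking the $\gcd(\chi,d)$ normalisation so that the relevant class is genuinely primitive integral and defines an honest section of the universal Jacobian (not just of its rationalisation); (ii) computing $\pi_!$ of powers of this $2$-class using the multiplicativity of the fibre integral and the Hopf-algebra structure of $H^*(\Sigma_g)$, which produces the symplectic intersection pairing and hence the factor $(2g+1)!!=\frac{(2g+2)!}{2^{g+1}(g+1)!}$ from the number of perfect matchings of a $(2g+2)$-element set (equivalently, the coefficient in $\omega^{g+1}$ expanded in a symplectic basis); and (iii) checking that no further torsion in $H^*(\mathcal{S}_g(\CPinf)_d;\bZ)$ coming from the base $\cM_g$ interferes. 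Steps (i) and (ii) are essentially Morita's arguments from \cite{MoritaJacobianI,MoritaJacobianII} adapted to allow the extra class $c$ of arbitrary degree $d$; the genuinely new bookkeeping is keeping $d$ as a free parameter and verifying the $\gcd$ appears with the claimed exponent $2$.
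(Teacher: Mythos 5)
Your reduction of the general case to degree zero is exactly the paper's: the class $\tfrac{1}{\gcd(\chi,d)}(\chi c - d\,c_1(T^vE))$ has fibrewise degree zero, and $\pi_!$ of its square is $-\Omega$; the paper packages this as a map $f_d : \mathcal{S}_g(\CPinf)_d \to \mathcal{S}_g(\CPinf)_0$, followed by the identification of $-\kappa_{-1,2}$ on the target with the pullback of Morita's class on $B\widetilde{\Gamma}_g$, which restricts to twice the symplectic form on the fibre $BH_1(\Sigma_g;\bZ)$. Up to that point your proposal and the paper agree, and your K\"unneth/intersection-form computation of $\pi_!(c^2)$ is the same one the paper performs.

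The gap is in your justification that $\Omega^{g+1}$ is torsion of the stated order. You argue that the cohomology of the universal Jacobian over $\mathcal{M}_g$ is, in the relevant degrees, generated by $\Theta$ and classes from the base via Leray--Hirsch, so that $\Theta^{g+1}=0$ fibrewise forces the global statement. Leray--Hirsch does not apply here: the monodromy of $\Gamma_g$ on $H^*(BH_1(\Sigma_g;\bZ))\cong\wedge^*H_1(\Sigma_g;\bZ)$ is the full symplectic group action, the Leray spectral sequence has $E_2^{p,q}=H^p(\mathcal{M}_g;\wedge^qH_1(\Sigma_g;\bZ))$ with genuinely nontrivial coefficient systems, and a degree-$(2g+2)$ class restricting to zero on fibres can perfectly well survive in $E_\infty^{p,q}$ for $0<q\leq 2g$. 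So ``$\Theta^{g+1}=0$ on each fibre'' proves nothing about the total space, and the ``index of the sublattice generated by $(g+1)$-fold products of $\Theta$'' is not a meaningful quantity, since those products already vanish fibrewise. The actual mechanism (Morita's, which the paper invokes as a black box) is a coefficient-system argument: $\Omega$ is $\mu_*(\theta\cup\theta)$ for a class $\theta\in H^1(\widetilde{\Gamma}_g;H_1(\Sigma_g;\bZ))$ and the intersection pairing $\mu$, and because $\theta$ has odd degree, the matching-count multiple $(2g+1)!!=\frac{(2g+2)!}{2^{g+1}(g+1)!}$ of $\Omega^{g+1}$ factors through $H^{2g+2}(\widetilde{\Gamma}_g;\wedge^{2g+2}H_1(\Sigma_g;\bZ))$, which vanishes because the coefficient module itself is zero ($H_1$ has rank $2g$). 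You have correctly located the combinatorial source of the constant, but attached it to the wrong vanishing mechanism; without the coefficient-system (or explicit cocycle) argument, or an explicit appeal to Morita's theorem, the central claim remains unproved.
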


In particular, given a surface bundle $\Sigma_g \to E \overset{\pi}\to B$ and a class $c \in H^2(E;\bZ)$ of degree $d$, the pullback of $\Omega$ via the classifying map is
$$\Omega(E,c) \deq -\frac{1}{\gcd(\chi,d)^2} \left ( \chi^2 \pi_!(c^2) - 2d\chi\pi_!(c_1(T^vE)\cdot c) + d^2 \kappa_1(E) \right ) \in H^2(B;\bZ)$$
and the theorem says that $\Omega(E,c)^{g+1}$ is torsion, so in particular trivial in rational cohomology.

\begin{rem}
The construction $c \mapsto \Omega(E,c)$ is easily seen to be non-linear in $c$, so given several classes $c_1$, ..., $c_n \in H^2(E, \bZ)$, we may form the linear combinations $c_A = \sum A_i c_i$ and we will see in \S \ref{sec:GeneralisingMorita} that the collection $\{\Omega(E, c_A)\}_{A \in \bZ^n}$ gives more information than the collection $\{\Omega(E,c_i)\}_{i=1}^n$.
\end{rem}

\subsection{Relation to theorems of Morita}
Morita produced two ways \cite{MoritaJacobianI, MoritaJacobianII} in which one may produce a complex line bundle on the total space of a surface bundle over a space closely related to $\mathcal{M}_g$, though he did not phrase it in this way. Instead, he produced maps from certain surface bundles with section into their associated bundle of Jacobian manifolds $J(\Sigma_g)$, however for pointed surfaces these may be identified with the degree zero Picard varieties $\mathrm{Pic}^0(\Sigma_g)$, and so it is not surprising that this is closely related to studying complex line bundles on the total spaces of surface bundles. We give below a homotopy-theoretic construction of line bundles on certain total spaces of surface bundles, which reproduces the construction of Morita but is more easily generalised.

\vspace{2ex}


We adopt the following useful convention: if $X \to \mathcal{M}_g$ is a space with a map to $\mathcal{M}_g$ classifying a surface bundle, we always write $\pi: \overline{X} \to X$ for the surface bundle it classifies, and $T^v$ for the vertical tangent bundle. We also write $e \deq e(T^v) \in H^2(\overline{X};\bZ)$ for the Euler class of the vertical tangent bundle.

Consider the universal smooth $\Sigma_g$-bundle with section
\begin{equation}\label{eq:UnivBundleSection}
\Sigma_g \lra \overline{\mathcal{M}_g^1} \overset{\pi}\lra \mathcal{M}_g^1
\end{equation}
The section $s: \mathcal{M}_g^1 \to \overline{\mathcal{M}_g^1}$ embeds $\mathcal{M}_g^1$ with a tubular neighbourhood isomorphic to the total space of an oriented 2-dimensional vector bundle, and we let $c := \nu \in H^2(\overline{\mathcal{M}_g^1};\bZ)$ denote the Thom class of this neighbourhood extended to the whole space. This gives a lift of the map $\mathcal{M}_g^1 \to \mathcal{M}_g$ to the space $\mathcal{S}_g(\CPinf)$, and the class $c$ has degree 1, so we obtain a map
$$f:\mathcal{M}_g^1 \lra \mathcal{S}_g(\CPinf)_1$$
classifying this data.

It is easy to see that $\nu^2 = e \cdot \nu$, and so $\kappa_{-1,2} = \kappa_{0,1}$. Furthermore there is a map of bundles
\begin{diagram}
* &\rTo& \mathcal{M}_g^1 &\rTo^{\pi' = \mathrm{Id}}& \mathcal{M}_g^1\\
\dTo & & \dTo^s & & \dTo^{\mathrm{Id}}\\
\Sigma_g & \rTo & \overline{\mathcal{M}_g^1} & \rTo^\pi & \mathcal{M}_g^1
\end{diagram}
and as $\nu$ is the Thom class of the normal bundle to $s$, we have the equation $\pi_!(X \cdot \nu) = \pi'_!(s^*(X)) = s^*(X)$, and so $\pi_!(\nu \cdot e) = e \in H^2(\mathcal{M}_g^1;\bZ)$. Then the formula of Theorem \ref{thm:Main} is
$$\Omega = (2-\chi)\chi e - \kappa_1 \in H^2(\mathcal{M}_g^1;\bZ)$$
and hence $((2-\chi)\chi e - \kappa_1)^{g+1}$ is trivial in the group $H^*(\mathcal{M}_g^1;\bQ)$, which appears in Morita's work as \cite[Theorem 1.6]{MoritaJacobianII}. Fibre integrating this equation to $\mathcal{M}_g$ gives the relation
\begin{equation*}
\sum_{i=0}^g \binom{g+1}{i+1} \left ( \frac{\kappa_1}{\chi(\chi-2)} \right )^{g-i} \cdot \kappa_i = 0 \in H^{2g}(\mathcal{M}_g;\bQ),
\end{equation*}
and multiplying $\Omega^{g+1}$ by $e^k$ for $k > 0$ and then fibre integrating gives the relation
\begin{equation}\label{eq:Morita1}
\sum_{i=-1}^{g} \binom{g+1}{i+1} \left ( \frac{\kappa_1}{\chi(\chi-2)} \right )^{g-i}\cdot \kappa_{i+k} = 0 \in H^{2g+2k}(\mathcal{M}_g;\bQ),
\end{equation}
in both cases bearing in mind that $\kappa_0 = \chi$.

\vspace{2ex}

We now give the second construction, and explain why it gives a homotopical version of the fibrewise Jacobi map as described by Morita \cite{MoritaJacobianI}. Let us write $\mathcal{M}_g(2)$ for the space $\Sigma_g \times \Sigma_g \hcoker \Diff^+(\Sigma_g)$, which parametrises surface bundles with two ordered (not necessarily distinct) points in each fibre. We write $\overline{\mathcal{M}_g(2)}$ for its universal bundle, so there is a smooth $\Sigma_g$-bundle
$$\Sigma_g \lra \overline{\mathcal{M}_g(2)} \overset{\pi}\lra \mathcal{M}_g(2)$$
with two sections, $s_1$ and $s_2$, which each embed $\mathcal{M}_g(2)$ with a tubular neighbourhood isomorphic to the total space of an oriented 2-dimensional vector bundle. As in the previous construction they each produce a class $\nu_1, \nu_2 \in H^2(\overline{\mathcal{M}_g(2)};\bZ)$, and we may let $c := \nu_1 - \nu_2$. This has degree zero on each fibre, and hence there is a classifying map $f: \mathcal{M}_g(2) \to \mathcal{S}_g(\CPinf)_0$ for this data.

The formula of Theorem \ref{thm:Main} is then
$$\Omega = -\pi_!((\nu_1-\nu_2)^2) = (2\nu_{12} - e_1 - e_2) \in H^2(\mathcal{M}_g(2);\bZ)$$
where $\nu_{12}$ is the cohomology class Poincar\'{e} dual to the locus of surfaces with two marked points where the two marked points coincide, and $e_i$ is the Euler class of the tangent bundle at the $i$-th marked point. This appears in Morita's work as \cite[Theorem 2.1]{MoritaJacobianI}, and produces relations in the cohomology of $\mathcal{M}_g(2)$ because $(2\nu_{12} - e_1 - e_2)^{g+1}=0$, but also relations in $\mathcal{M}_g(1)$ and $\mathcal{M}_g$ by fibre integration. As there are easy identities $\nu_{12}^2 = \nu_{12}e_1 = \nu_{12} e_2$, the class $(2\nu_{12} - e_1 - e_2)^{g+1}$ may be expanded and fibre integrated to see that for each $h \geq 0$, $\kappa_{g+h-1}$ is expressible in terms of sums of products of $\kappa$-classes of lower degree, which reproves Mumford's theorem \cite[\S 5--6]{Mumford} that the classes $\kappa_1, ..., \kappa_{g-2}$ generate the tautological algebra.

Let us now relate this construction to Morita's work \cite{MoritaJacobianI} on the fibrewise Jacobi map. The universal surface bundle over $\mathcal{M}_g$ has an associated bundle of first homology groups, and we may take the fibrewise classifying space to obtain a bundle
\begin{equation}\label{eq:JacobianBundle}
BH_1(\Sigma_g;\bZ) \lra \widetilde{\mathcal{M}}_g \lra \mathcal{M}_g.
\end{equation}
Just as the fundamental group of $\mathcal{M}_g$ is the \emph{mapping class group} $\Gamma_g$, the fundamental group of $\widetilde{\mathcal{M}}_g$ is the \emph{extended mapping class group} $\widetilde{\Gamma}_g$ introduced by Kawazumi \cite{Kawazumi98}. Recall that for a Riemann surface $\Sigma$, its \emph{Jacobian} is the torus $H_1(\Sigma;\bR)/H_1(\Sigma;\bZ)$, and as $H_1(\Sigma;\bR)$ is a contractible free $H_1(\Sigma;\bZ)$-space this is a model for $BH_1(\Sigma;\bZ)$. It is clear that (\ref{eq:JacobianBundle}) is fibre homotopy equivalent to the bundle of fibrewise Jacobian varieties.

Note that $\bC\bP^\infty$ is the subgroup of constant maps the topological abelian group $\mathrm{map}^0(\Sigma_g, \bC\bP^\infty)$, and is preserved by the action of $\Diff^+(\Sigma_g)$. Hence $\Diff^+(\Sigma_g)$ acts on the quotient $\mathrm{map}^0(\Sigma_g, \bC\bP^\infty) / \bC\bP^\infty$, which one easily sees to be equivariantly homotopy equivalent to $BH_1(\Sigma_g;\bZ)$. Hence there is a diagram
\begin{diagram}
\bC\bP^\infty &\rTo & \mathrm{map}^0(\Sigma_g, \bC\bP^\infty) &\rTo & BH_1(\Sigma_g;\bZ)\\
\dEq & & \dTo & & \dTo\\
\bC\bP^\infty &\rTo & \mathcal{S}_g(\bC\bP^\infty)_0 & \rTo & \widetilde{\mathcal{M}}_g\\
& & \dTo & & \dTo\\
& & \mathcal{M}_g & \rEq & \mathcal{M}_g
\end{diagram}
where the rows and columns are fibrations, and the top right hand square is a homotopy pullback.

We thus have the composition
$$\mathcal{M}_g(2) \overset{f}\lra \mathcal{S}_g(\bC\bP^\infty)_0 \lra \widetilde{\mathcal{M}}_g$$
of maps over $\mathcal{M}_g$. On fibres this gives
$$\Sigma_g \times \Sigma_g \lra \mathrm{\map}^0(\Sigma_g, \bC\bP^\infty) \lra BH_1(\Sigma_g;\bZ)$$
where the first map sends a pair $(x,y)$ to the Poincar\'{e} dual of $x-y$, and the second map takes the quotient by $\bC\bP^\infty$. Hence on each factor the map is the identity map on first homology.

Observing that $\mathcal{M}_g(2) = \overline{\mathcal{M}_g(1)}$, the composition can also be viewed as the middle row in a map of fibrations
\begin{diagram}
\Sigma_g & \rTo & BH_1(\Sigma_g)\\
\dTo & & \dTo \\
\overline{\mathcal{M}_g(1)} & \rTo &\widetilde{\mathcal{M}}_g\\
\dTo & & \dTo \\
\mathcal{M}_g(1) & \rTo & \mathcal{M}_g,
\end{diagram}
which is given by a map $\Sigma_g \to BH_1(\Sigma_g)$ on each fibre with the property that it induces the identity map in first homology: this is a homotopical description of the fibrewise Jacobi map. 


\subsection{Looijenga's theorem}
Looijenga \cite{Looijenga} has shown that the tautological algebra is trivial above degree $(g-2)$, and has dimension at most 1 in degree $(g-2)$. Using the Witten conjecture, it can be shown \cite[Theorem 2]{Faber} that $\kappa_{g-2} \neq 0$ on $\mathcal{M}_g$, and so $R^{g-2}(\mathcal{M}_g)$ has precisely dimension 1. Unfortunately, all the relations in $R^*(\mathcal{M}_g)$ obtained in the previous section (by fibre integrating relations on $\mathcal{M}_g(1)$ and $\mathcal{M}_g(2)$ down to $\mathcal{M}_g$) lie in degrees greater than $ g-2$, and so in the light of Looijenga's theorem are not so interesting.

Given a monomial $\kappa_1^{i_1} \cdots \kappa_{g-2}^{i_{g-2}}$ such that $\sum_{j=1}^{g-2} j \cdot i_j = g-2$, one can ask for the unique rational numbers $M(g;i_1, ..., i_{g-2})$ such that
$$\kappa_1^{i_1} \cdots \kappa_{g-2}^{i_{g-2}} = M(g;i_1, ..., i_{g-2}) \cdot \kappa_{g-2}.$$
This means finding relations in $R^{g-2}(\mathcal{M}_g)$, and in the following section we give a procedure for finding relations in this and lower degrees.

\subsection{Generalising Morita's theorems}\label{sec:GeneralisingMorita}
Let us write $\mathcal{M}_g(n) := (\Sigma_g)^n \hcoker \Diff^+(\Sigma_g)$ for the moduli space of genus $g$ surfaces with $n$ ordered, not necessarily distinct points on them. Note that $\mathcal{M}_g(1) = \mathcal{M}_g^1$. Consider the universal family
\begin{equation}\label{eq:UnivBundleN}
\Sigma_g \lra \overline{\mathcal{M}_g(n)} \overset{\pi}\lra \mathcal{M}_g(n)
\end{equation}
which has $n$ sections $s_1$, ..., $s_n$. Each section $s_i: \mathcal{M}_g(n) \to \overline{\mathcal{M}_g(n)}$ embeds $\mathcal{M}_g(n)$ with a tubular neighbourhood isomorphic to the total space of an oriented 2-dimensional vector bundle, and we let $\nu_i \in H^2(\overline{\mathcal{M}_g(n)};\bZ)$ denote the Thom class of this neighbourhood extended to the whole space. For each vector $A = (A_1, ..., A_n) \in \bZ^n$, define
$$c_A \deq \sum_{i=1}^n A_i \nu_i.$$
This gives a lift of the map $p : \mathcal{M}_g(n) \to \mathcal{M}_g$ to the space $\mathcal{S}_g(\CPinf)$, and the class $c_A$ has degree $d_A := \sum A_i$, so we obtain a map
$$f:\mathcal{M}_g(n) \lra \mathcal{S}_g(\CPinf)_{d_A}$$
classifying this data. Then it is easy to calculate that
$$\Omega_A = \sum_{i=1}^n (\chi^2 A_i^2 - 2d_A \chi A_i) e_i + 2\chi^2 \sum_{i<j} A_i A_j\nu_{ij} + d_A^2 \kappa_1 \in H^2(\mathcal{M}_g(n);\bZ)$$
and so $\Omega_A^{g+1} = 0$ in rational cohomology \emph{for all vectors} $A \in \bZ^n$. In particular, considered as a polynomial in the variables $A_1$, ..., $A_n$ with coefficients in $H^{2g+2}(\mathcal{M}_g(n);\bQ)$, the expression $\Omega_A^{g+1}$ is the zero polynomial. This gives a tremendous number of trivial tautological classes in $H^{2g+2}(\mathcal{M}_g(n);\bQ)$, which may be fibre integrated to $\mathcal{M}_g$ to give trivial tautological classes in the group $H^{2g+2-2n}(\mathcal{M}_g;\bQ)$.

\begin{example}
For $n=2$ we have
$$\Omega_A = A_1^2(\chi(\chi-2)e_1+\kappa_1) + 2A_1A_2(\chi^2 \nu_{12}-\chi(e_1+e_2)+\kappa_1) + A_2^2(\chi(\chi-2)e_2+\kappa_1)$$
and one may calculate the coefficient $X_i$ of $A_1^iA_2^{2g+2-i}$ in $\Omega_A^{g+1}$ to be
$$\sum_{a=0}^{\lfloor i/2 \rfloor}(\chi(\chi-2)e_1+\kappa_1)^a(2\chi^2 \nu_{12} -2\chi(e_1+e_2)+2\kappa_1)^{i-2a}(\chi(\chi-2)e_2+\kappa_1)^{g+1-i+a}$$
which must thus be trivial in $H^{2g+2}(\mathcal{M}_g(2);\bQ)$. 
\end{example}

\subsection{Fibre-integration and graphs}
In order to use the above relations to obtain relations on $\mathcal{M}_g$ we require a way to compute the fibre integral of a monomial
$$e^a \nu^b \kappa_1^c := (e_1^{a_1} \cdots e_n^{a_n}) \cdot( \nu_{12}^{b_{12}} \nu_{13}^{b_{13}} \cdots \nu_{n-1,n}^{b_{n-1,n}}) \cdot \kappa_1^c$$
along the map $\pi : \mathcal{M}_g(n) \to \mathcal{M}_g$.

\begin{defn}
A \emph{weighted graph} $\Gamma$ on the set $\{1, 2, ..., n\}$ is a graph on this set of vertices with each vertex labelled by a natural number, and each edge labelled by a non-zero natural number. We denote by $w(\Gamma)$ the total weight of $\Gamma$, and by $v(\Gamma)$ and $e(\Gamma)$ the total number of vertices and edges of $\Gamma$ respectively, counted without weight.
\end{defn}

To each monomial $e^a \nu^b \kappa_1^c$ we associate a weighted graph $\Gamma(a, b)$ by weighting the vertex $\{i\}$ by $a_i$, and putting an edge between $\{i\}$ and $\{j\}$ if $b_{ij} \neq 0$, and weighting it $b_{ij}$. Using the standard relations $e_i \nu_{ij} = e_j \nu_{ij}$ and $\nu_{ij} \nu_{jk} = \nu_{ij} \nu_{ik}$, we see that the monomials $e^a \nu^b \kappa_1^c$ and $e^{a'} \nu^{b'} \kappa_1^{c'}$ are equal precisely when the graphs $\Gamma(a,b)$ and $\Gamma(a',b')$ differ by a sequence of the following moves:
\begin{enumerate}[(i)]
\item Moving weight from an edge to an adjacent vertex, and vice-versa.
\item Given two edges leaving the same vertex $x$ and going to $y$ and $z$ respectively, where there is no edge $yz$, we may create an edge $yz$ of weight 1 at the expense of removing 1 weight from $xy$ or $xz$.
\end{enumerate}

By reducing weighted graphs to weighted trees, it is then easy to see that
$$\pi_!(e^a \nu^b \kappa_1^c) = \kappa_1^c \prod_{\Gamma' \subset \Gamma(a,b)} \kappa_{w(\Gamma') - v(\Gamma')}$$
where the product is taken over the connected components of $\Gamma(a,b)$. In this formula we must remember that $\kappa_i = 0$ for all $i < 0$, and $\kappa_0 = \chi$

\begin{example}
Let us look again at the case $n=2$, where we have
$$\Omega_A = (\chi^2 A_1^2 - 2d_A\chi A_1) e_1 + (\chi^2 A_2^2 - 2d_A\chi A_2) e_2 + 2\chi^2 A_1A_2 \nu_{12} + (d_A)^2 \kappa_1.$$
The above description in terms of weighted graphs says that the coefficient of $\kappa_{g-1}$ in $\pi_!(\Omega_A^{g+1})$ is the coefficient of $t^{g+1}$ in
$$\sum_{a_1, a_2 = 0}^\infty \sum_{b=1}^\infty (\chi^2 A_1^2 - 2d_A\chi A_1)^{a_1} (\chi^2 A_2^2 - 2d_A\chi A_2)^{a_2} (2A_1A_2)^b t^{a_1 + a_2 + b}.$$
Note that the coefficients are symmetric functions in the $A_i$, so can be written in terms of $\sigma_1 = A_1 + A_2$ and $\sigma_2 = A_1 A_2$. After some work, one can see that this formal power series is
$$ \left ( \frac{1}{1 - 2(\chi(\chi-2)\sigma_1^2 - 2\chi^2\sigma_2)t + (\chi^4\sigma_2^2 + (4\chi^2-2\chi^3)\sigma_1^2\sigma_2)t^2} \right ) \left ( \frac{2\sigma_2 t}{1-2\sigma_2 t} \right)$$
and by specialising to $\sigma_1=0$ one may compute that all the coefficients of $t^{g+1}$ are non-zero for $g \geq 2$. Hence we see that the coefficient of $\kappa_{g-1}$ is non-zero in the relation, and hence it is decomposable on $\mathcal{M}_g$.
\end{example}

\subsection{Calculations}
For general $n$, it is a formidable problem of combinatorial algebra to extract explicit relations from the above description, but we give in the following examples some computer calculations we have made in low genus, which demonstrate that the relations obtained are non-trivial. The calculations were made in {\sc Magma} \cite{Magma}.

\begin{example}
Doing the above for $g=3$ and $n=2, 3$ yields the relations
\begin{eqnarray*}
\kappa_1^2 = \kappa_2 = 0
\end{eqnarray*}
in $H^*(\mathcal{M}_3;\bQ)$.
\end{example}

\begin{example}
Doing the above for $g=4$ and $n=2, 3$ yields the relations
\begin{eqnarray*}
\kappa_2^2 = \kappa_1\kappa_2 = \kappa_3 &=& 0\\
    3\kappa_1^2 &=& 32\kappa_2
\end{eqnarray*}
in $H^*(\mathcal{M}_4;\bQ)$.

\end{example}

\begin{example}
Doing the above for $g=5$ and $n=2, 3$ yields the relations
\begin{eqnarray*}
\kappa_4 = \kappa_1 \kappa_3 = \kappa_2^2 = \kappa_2 \kappa_3 = \kappa_3^2 &=& 0\\
\kappa_1^3 &=& 288\kappa_3\\
\kappa_1\kappa_2 &=& 20\kappa_3
\end{eqnarray*}
in $H^*(\mathcal{M}_5;\bQ)$. There is a relation in degree 4 (namely $5\kappa_1^2 = 72\kappa_2$) which we have not detected, as for $g=5$ and $n=3$ we only obtain relations in degree 6 (and above). Possibly using the same method for $n=4$ we may obtain this relation, although the problem becomes computationally unwieldy very quickly.
\end{example}

\begin{example}
Doing the above for $g=6$ and $n=2,3$ yields the relations
\begin{eqnarray*}
\kappa_4^2 = \kappa_3\kappa_4 = \kappa_2\kappa_4= \kappa_2\kappa_3= \kappa_1\kappa_4= \kappa_5 &=& 0\\
    5 \kappa_1^4 &=& 73728 \kappa_4 \\
    5 \kappa_1^2 \kappa_2 &=& 4064 \kappa_4\\
    5 \kappa_2^2 &=& 226 \kappa_4\\
    \kappa_1 \kappa_3 &=& 32 \kappa_4
\end{eqnarray*}
in $H^*(\mathcal{M}_6;\bQ)$. There are also relations in degree 6 which we are not able to see using $n=3$, but could perhaps find using $n=4$.
\end{example}

Finally we give an example of the relations obtained in higher genus. Although the $n=2$ calculations at genus 9 take only a few seconds, the $n=3$ calculations take about fifteen minutes. We have also made calculations at genus 12, where the $n=3$ calculations take several hours, but these give a very incomplete set of relations, and we do not include them here. We suspect that a less na\"{i}ve algorithm would be able to go to significantly higher degrees.

\begin{example}
Doing the above for $g=9$ and $n=2,3$ yields the relations
\begin{eqnarray*}
    \kappa_7^2 = \kappa_6\kappa_7 = \kappa_5\kappa_7 = \kappa_4\kappa_7 = \kappa_3\kappa_7 &=& 0\\
    \kappa_2\kappa_7 = \kappa_4^2 =\kappa_3\kappa_5 = \kappa_2\kappa_6 = \kappa_1\kappa_7 = \kappa_8 &=& 0\\
    \kappa_1^7 &=& 26011238400\kappa_7 \\
    7\kappa_1^5 \kappa_2 &=& 6195953664\kappa_7\\
    35\kappa_1^3\kappa_2^2 &=& 1060508672\kappa_7\\
    35\kappa_1\kappa_2^3 &=& 36513024\kappa_7\\
    35\kappa_1^4\kappa_3 &=& 743491584\kappa_7\\
    5\kappa_1^2\kappa_2\kappa_3 &=& 3667456\kappa_7\\
    35\kappa_2^2\kappa_3 &=& 891328\kappa_7\\
    7\kappa_1\kappa_3^2 &=& 125824\kappa_7\\
    7\kappa_1^3\kappa_4 &=& 2796544\kappa_7\\
    7\kappa_1\kappa_2\kappa_4 &=& 97536\kappa_7\\
    7\kappa_3\kappa_4 &=& 2424\kappa_7\\
    \kappa_1^2\kappa_5 &=& 6240\kappa_7\\
    \kappa_2\kappa_5 &=& 220\kappa_7\\
    \kappa_1\kappa_6 &=& 84\kappa_7.
\end{eqnarray*}
Note that $\bQ[\kappa_1, ..., \kappa_7]$ has rank 15 (the number of partitions of 7) in degree 14, but by Looijenga's theorem $R^*(\mathcal{M}_9)$ has rank one in degree 14. We see above precisely 14 relations, so have determined all intersection numbers on $\mathcal{M}_9$.
\end{example}

In our calculations at $g=12$, $n=3$, we do not find all the intersection numbers on $\mathcal{M}_{12}$. We find 34 linearly-independent relations, but $\bQ[\kappa_1, ..., \kappa_{10}]$ has rank 77 in degree 20 and so we should find 76 relations.

\vspace{2ex}

In these examples, even in high degrees where all tautological classes become trivial, each relations cannot be obtained from fibre integrating any single trivial tautological class on $\mathcal{M}_g(n)$ coming from the above construction. Rather, each individual relation typically gives a linear equation among the tautological classes in a certain degree with very large coefficients, and this system of equations turns out to be highly overdetermined.

\subsection{Relation to the work of Faber}

Faber \cite{Faber} has made detailed conjectures concerning the algebraic structure of the tautological algebra $R^*(\mathcal{M}_g)$, and has verified these conjectures at least up to genus 23 by producing sufficiently many relations among the $\kappa_i$.

The method used by Faber to produce relations is similar to that used here in that it also involves producing relations on $\mathcal{M}_g(n)$ (or rather its algebraic analogue, $\gC_g^n$, the iterated fibre product of the universal curve) and pushing them forward to $\mathcal{M}_g$. However, Faber's method for producing relations on $\mathcal{M}_g(n)$ is completely algebro-geometric and does not seem to be related to Morita's. It seems that even the relations that are obtained on $\mathcal{M}_g(n)$ are different: ours involve only the classes $\kappa_1$, $e_i$ and $\nu_{ij}$, and the higher $\kappa$ classes appear only after fibre integration, whereas Faber's seem to already include the higher $\kappa$ classes.

\section{Proof of Theorem \ref{thm:Main}}
Let us first treat the degree zero case, where we must show that
$$\Omega \deq -\kappa_{-1,2} \in H^2(\mathcal{S}_g(\CPinf)_0;\bZ)$$
has the property that $\Omega^{g+1}$ is torsion annihilated by $\frac{(2g+2)!}{2^{g+1}(g+1)!}$. Recall from \cite{ERW10} that there is a fibration
\begin{equation}\label{eq:TopologicalJacobianFibration}
\CPinf \lra \mathcal{S}_g(\CPinf)_0 \overset{\pi}\lra B\widetilde{\Gamma}_g
\end{equation}
where the base space is the classifying space of the \textit{extended mapping class group}
$$\widetilde{\Gamma}_g \deq H_1(\Sigma_g;\bZ) \rtimes \Gamma_g$$
which fits into a fibration
$$BH_1(\Sigma_g;\bZ) \lra B\widetilde{\Gamma}_g \lra B\Gamma_g.$$
Morita has shown \cite{MoritaJacobianI} that there is a class $\Omega \in H^2(B\widetilde{\Gamma}_g;\bZ)$ enjoying the properties
\begin{enumerate}[(i)]
    \item $\Omega$ restricts to twice the symplectic form in $H^2(BH_1(\Sigma_g;\bZ);\bZ) \cong \wedge^2 H_1(\Sigma_g;\bZ)$,
	\item $s^*(\Omega) =0 \in H^2(B\Gamma_g;\bZ)$ where $s$ is the canonical section,
	\item $\Omega^{g+1} \in H^{g+2}(B\widetilde{\Gamma}_g;\bZ)$ is torsion annihilated by $\frac{(2g+2)!}{2^{g+1}(g+1)!}$.
\end{enumerate}
Thus the degree zero case of Theorem \ref{thm:Main} will follow one we show that this $\Omega$ pulls back to $-\kappa_{-1,2}$ on $\mathcal{S}_g(\CPinf)_0$.

\begin{prop}
The class $\pi^*(\Omega) \in H^2(\mathcal{S}_g(\CPinf)_0;\bZ)$ is $-\kappa_{-1,2}$.
\end{prop}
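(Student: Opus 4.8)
The plan is to identify the pullback of Morita's class $\Omega$ by checking its restriction along the two "coordinate directions" of the fibration $\CPinf \to \cS_g(\CPinf)_0 \overset{\pi}\to B\widetilde\Gamma_g$, and then argue that $-\kappa_{-1,2}$ is determined by the same restriction data. First I would recall that $\kappa_{-1,2} = \pi_!(c^2)$ where $c \in H^2(\overline{\cS_g(\CPinf)_0};\bZ)$ is the universal class of degree zero on fibres; here $\pi$ abusively denotes the universal surface bundle over $\cS_g(\CPinf)_0$. Since $c$ has degree zero, $\pi_!(c^2) \in H^2(\cS_g(\CPinf)_0;\bZ)$ is a genuine degree-two class, and I want to show it coincides with $-\pi^*(\Omega)$ as the fibration $\pi$ in \eqref{eq:TopologicalJacobianFibration} is understood. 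The comparison should be via the Serre spectral sequence (or Leray--Hirsch-type reasoning) of the fibration $BH_1(\Sigma_g;\bZ) \to B\widetilde\Gamma_g \to B\Gamma_g$: a degree-two class on $B\widetilde\Gamma_g$ is pinned down by its restriction to the fibre $BH_1(\Sigma_g;\bZ)$ together with a choice of splitting over $B\Gamma_g$, because $H^2(BH_1;\bZ) \cong \wedge^2 H_1(\Sigma_g;\bZ)$ sits in the spectral sequence and the section $s$ kills the base contribution.

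The two key computations are then: (1) restrict $-\kappa_{-1,2}$ to the homotopy fibre of $\pi: \cS_g(\CPinf)_0 \to B\widetilde\Gamma_g$, which is $\CPinf$ — but on this fibre the surface bundle is trivial and $c$ restricts to a class pulled back along $\Sigma_g \times \CPinf \to \CPinf$ that is, after the fibrewise-Jacobi description in the introduction, the Poincaré-duality pairing applied to the tautological class; fibre-integrating its square over $\Sigma_g$ should recover (twice, or $\pm$ twice) the symplectic form, matching Morita's property (i); and (2) restrict $-\kappa_{-1,2}$ along the canonical section $s: B\Gamma_g = \cM_g \to \cS_g(\CPinf)_0$, where $c$ becomes the zero (trivial) line bundle class, so $s^*\kappa_{-1,2} = \pi_!(0) = 0$, matching Morita's property (ii). Having matched restrictions to the fibre and the section, and since any class in $H^2(B\widetilde\Gamma_g;\bZ)$ pulled up to $\cS_g(\CPinf)_0$ is detected by exactly this data (the fibre $\CPinf$ of $\pi$ contributes nothing to $H^2$ in the relevant position beyond what the $BH_1$-direction already sees, because $H^2(\CPinf)$ is generated by a class that fibre-integrates to a $\kappa$-class, not to $\Omega$), I would conclude $\pi^*(\Omega) = -\kappa_{-1,2}$.

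The main obstacle I anticipate is step (1): carefully computing $\pi_!(c^2)$ restricted to the fibre and showing it equals $-2\,(\text{symplectic form})$ rather than $+2$ or some other multiple, which is exactly where the sign in the statement (the leading $-$ in $\Omega \deq -\kappa_{-1,2}$) gets fixed. This requires being precise about the identification $H^2(BH_1(\Sigma_g;\bZ);\bZ) \cong \wedge^2 H_1(\Sigma_g;\bZ)$, the sign in the cohomological fibre-integration $\pi_!$ (the footnote in the introduction warns that this convention differs from algebraic geometry by a sign), and the direction of the Poincaré duality isomorphism used in the fibrewise-Jacobi map $\Sigma_g \to BH_1(\Sigma_g)$. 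A clean way to organise it is to work on the universal example: pull everything back to $\overline{\cM_g(1)} \to \cM_g(1)$ via the fibrewise Jacobi map, where $c = \nu$ is an honest Thom class, use $\nu^2 = e\cdot\nu$ and $\pi_!(\nu \cdot e) = e$ from the introduction to reduce, and track the sign through that concrete model; this sidesteps having to argue abstractly about the spectral sequence differentials and instead reduces the proposition to the already-established identities.
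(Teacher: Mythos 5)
Your overall architecture --- kill the $\kappa_{0,1}$-component by restricting to a fibre, kill the $\kappa_1$-component by restricting along the section classifying the trivial bundle, then fix the remaining constant of proportionality by an explicit computation --- is the same as the paper's. But the paper makes the first two steps rigorous by citing that $H^2(\mathcal{S}_g(\CPinf)_0;\bZ)$ is torsion-free with rational basis $\kappa_1$, $\kappa_{-1,2}$, $\kappa_{0,1}$; you would need to invoke this (or something equivalent) to justify the claim that the class is ``pinned down'' by your restriction data, and your Serre spectral sequence sketch for $BH_1 \to B\widetilde{\Gamma}_g \to B\Gamma_g$ lives on the wrong space for that purpose, since the class being identified is $\pi^*(\Omega) \in H^2(\mathcal{S}_g(\CPinf)_0;\bZ)$ rather than $\Omega$ itself.

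The genuine gap is in your step (1): you restrict to the homotopy fibre $\CPinf$ of $\pi\colon \mathcal{S}_g(\CPinf)_0 \to B\widetilde{\Gamma}_g$ and expect to recover $\pm$ twice the symplectic form. Over that fibre the universal surface bundle is the trivial bundle $\Sigma_g \times \CPinf$ and $c$ is pulled back from $\CPinf$ (this fibre is the locus of \emph{constant} maps), so $\pi_!(c^2)=0$ there; likewise $\pi^*(\Omega)$ restricts to zero, being pulled back from the base. This restriction therefore only shows that both classes have vanishing $\kappa_{0,1}$-component --- it cannot fix the constant of proportionality, and Morita's property (i) does not refer to this fibre at all. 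The computation that actually fixes the sign must be done on $\map^0(\Sigma_g,\CPinf)$, the fibre over $\mathcal{M}_g$, which maps to $BH_1(\Sigma_g;\bZ)$ by quotienting out the constants: there $c$ is the evaluation map, whose K\"unneth decomposition $c=\sum a_i\otimes b_i - b_i\otimes a_i + 1\otimes X$ squares and fibre-integrates to $-2\sum a_i\wedge b_i$, i.e.\ minus twice the symplectic form, which against property (i) yields the coefficient $-1$. Your fallback of pulling back to $\overline{\mathcal{M}_g(1)}=\mathcal{M}_g(2)$ also does not close the argument as stated: the class used there must have degree zero (so $c=\nu_1-\nu_2$, not the degree-one class $\nu$), you would need injectivity of $f^*$ on the relevant part of $H^2$, and you would need Morita's own identification of $\Omega$ on $\mathcal{M}_g(2)$ --- which is uncomfortably close to assuming what the proposition is meant to establish.
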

\begin{proof}
From \cite{ERW10}, we know the group $H^2(\mathcal{S}_g(\CPinf)_0;\bZ)$ is torsion-free, and a rational basis is given by $\kappa_1$, $\kappa_{-1,2}$ and $\kappa_{0,1}$. In the fibration (\ref{eq:TopologicalJacobianFibration}) the classes $\kappa_1$ and $\kappa_{-1,2}$ restrict to zero on $\CPinf$, but $\kappa_{0,1}$ restricts to a non-zero class. Thus $\pi^*(\Omega)$ lies in the span of $\kappa_1$ and $\kappa_{-1,2}$. On the other hand, the section $\mathcal{M}_g \to \mathcal{S}_g(\CPinf)_0$ that classifies the trivial line bundle pulls back $\kappa_1$ non-trivially. Thus $\pi^*(\Omega)$ lies in the span of $\kappa_{-1,2}$, and it just remains to determine the constant of proportionality.

There is a commutative diagram
\begin{diagram}
\CPinf & \rTo & \map^0(\Sigma_g, \CPinf) & \rTo & BH\\
\dEq & & \dTo & & \dTo\\
\CPinf & \rTo & \mathcal{S}_g(\CPinf)_0 &\rTo& B\widetilde{\Gamma}_g
\end{diagram}
and the class $\Omega \in H^2(B\widetilde{\Gamma}_g;\bZ)$ pulls back to twice the symplectic form on $BH$. The class $\kappa_{-1,2}$ evaluated on the surface bundle
$$\Sigma_g \lra \Sigma_g \times \map^0(\Sigma_g, \CPinf) \lra \map^0(\Sigma_g, \CPinf)$$
may be computed as follows. The class $c \in H^2(\Sigma_g \times \map^0(\Sigma_g, \CPinf);\bZ)$ is the evaluation map, which in the K\"{u}nneth decomposition is
$$c = \sum a_i \otimes b_i - b_i \otimes a_i + 1 \otimes X$$
for some class $X \in H^2(\map^0(\Sigma_g, \CPinf);\bZ)$. This squares to
$$c^2 = -2[\Sigma_g]^* \otimes \sum a_i \wedge b_i + \cdots$$
which fibre integrates to $-2\sum a_i \wedge b_i \in H^2(\map^0(\Sigma_g, \CPinf);\bZ)$. This is precisely the pullback of minus twice the symplectic form from $BH$. Thus $\pi^*(\Omega) = -\kappa_{-1,2}$.
\end{proof}

To prove Theorem \ref{thm:Main} for general degrees, note that if $c$ is the degree $d$ line bundle on the universal bundle over $\mathcal{S}_g(\CPinf)_d$ then $c' = \frac{1}{\gcd(d, \chi)}(\chi c - d e)$, where $e$ is the Euler class of the vertical tangent bundle, is a degree zero line bundle, and this produces a map
$$f_d : \mathcal{S}_g(\CPinf)_d \lra \mathcal{S}_g(\CPinf)_0.$$
Theorem \ref{thm:Main} now follows from
\begin{prop}
The class $f_d^*(\kappa_{-1,2})$ is
$$\frac{1}{\gcd(d,\chi)^2}\left(\chi^2 \kappa_{-1,2} - 2d\chi \kappa_{0,1} + d^2 \kappa_1 \right) \in H^2(\mathcal{S}_g(\CPinf)_d;\bZ).$$
\end{prop}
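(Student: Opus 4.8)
The plan is to unwind the construction of $f_d$ and appeal to naturality of fibre integration. Write $c_0 \in H^2(\overline{\mathcal{S}_g(\CPinf)_0};\bZ)$ for the tautological degree-zero class on the universal bundle over $\mathcal{S}_g(\CPinf)_0$, so that $\kappa_{-1,2} = \pi_!(c_0^2)$ by the definition $\kappa_{i,j} = \pi_!(c_1(T^vE)^{i+1}c^j)$. By construction $f_d$ is the classifying map of the universal surface bundle over $\mathcal{S}_g(\CPinf)_d$ together with the degree-zero class $c' = \tfrac{1}{\gcd(d,\chi)}(\chi c - d e)$; note $c'$ is genuinely integral since both $\chi$ and $d$ are divisible by $\gcd(d,\chi)$, and it has degree $\pi_!(c') = \tfrac{1}{\gcd(d,\chi)}(\chi d - d\chi) = 0$, so $f_d$ really does land in the degree-zero component. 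Since $f_d$ is a map over $\mathcal{M}_g$, the surface bundle over $\mathcal{S}_g(\CPinf)_d$ is the pullback along $f_d$ of the one over $\mathcal{S}_g(\CPinf)_0$; hence on total spaces the induced map $\overline{f_d}$ satisfies $\overline{f_d}^*(e) = e$, and by the universal property of the tautological class, $\overline{f_d}^*(c_0) = c'$.

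With these identifications in hand the proof is a one-line computation. Naturality (base change) of the Gysin map under pullback of the surface bundle gives
\[
f_d^*(\kappa_{-1,2}) \;=\; f_d^*\,\pi_!(c_0^2) \;=\; \pi_!\big((\overline{f_d}^* c_0)^2\big) \;=\; \pi_!(c'^2) \;=\; \frac{1}{\gcd(d,\chi)^2}\,\pi_!\!\left((\chi c - d e)^2\right),
\]
and expanding the square and fibre-integrating term by term, using $\pi_!(c^2) = \kappa_{-1,2}$, $\pi_!(c\cdot e) = \pi_!(c_1(T^vE)\cdot c) = \kappa_{0,1}$ and $\pi_!(e^2) = \pi_!(c_1(T^vE)^2) = \kappa_1$, yields exactly $\tfrac{1}{\gcd(d,\chi)^2}(\chi^2 \kappa_{-1,2} - 2d\chi\,\kappa_{0,1} + d^2 \kappa_1)$.

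There is no serious obstacle here: the only points deserving a sentence are the two compatibilities $\overline{f_d}^*(c_0) = c'$ and $\overline{f_d}^*(e) = e$. Of these, the first is immediate from the fact that $\mathcal{S}_g(\CPinf)_0$ carries a tautological degree-zero class classifying maps into it (so pulling it back along $f_d$ returns precisely the class $c'$ that $f_d$ was defined to classify), and the second holds because $f_d$ was constructed as a map lying over $\mathcal{M}_g$, so the vertical tangent bundle is unchanged. Everything else is the binomial theorem together with additivity of $\pi_!$ and the definition of the generalised Mumford--Morita--Miller classes.
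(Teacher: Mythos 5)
Your proof is correct and follows essentially the same route as the paper's: identify $f_d^*(\kappa_{-1,2})$ with $\pi_!((c')^2)$ and expand the square, fibre-integrating term by term. You simply make explicit the base-change/naturality step and the pullback identities $\overline{f_d}^*(c_0)=c'$, $\overline{f_d}^*(e)=e$ that the paper compresses into the phrase ``by definition.''
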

\begin{proof}
The class $f_d^*(\kappa_{-1,2})$ is by definition $\pi_!((c')^2)$. We compute
$$(c')^2 = \frac{1}{\gcd(d, \chi)^2}(\chi c - d e)^2 = \frac{1}{\gcd(d, \chi)^2}(\chi^2 c^2 - 2d \chi e c + d^2e^2)$$
and the fibre integral of this class is, by definition of $\kappa_{-1,2}$, $\kappa_{0,1}$ and $\kappa_1$, the element in the statement of the proposition.
\end{proof}

\bibliographystyle{plain}
\bibliography{MainBib}

\end{document}